\newtheorem{theorem}{Theorem}[section]
\newtheorem{lemma}[theorem]{Lemma}
\newtheorem{problem}[theorem]{Problem}
\def\bM{{\mathbb{M}}}
\def\cD{{\mathcal{D}}}
\def\cF{{\mathcal{F}}}
\def\cG{{\mathcal{G}}}
\def\cM{{\mathcal{M}}}
\def\cR{{\mathcal{R}}}
\begin{document}

\title[Correlation matrices]{On the shape of correlation matrices for unitaries}

\author[M. Mori]{Michiya Mori}

\address{Graduate School of Mathematical Sciences, The University of Tokyo, 3-8-1 Komaba, Meguro-ku, Tokyo, 153-8914, Japan; Interdisciplinary Theoretical and Mathematical Sciences Program (iTHEMS), RIKEN, 2-1 Hirosawa, Wako, Saitama, 351-0198, Japan.}
\email{mmori@ms.u-tokyo.ac.jp}

\thanks{The author is supported by JSPS KAKENHI Grant Number 22K13934.}
\subjclass[2020]{Primary 46L10, 81P40.} 

\keywords{quantum correlation matrix}

\date{}

\begin{abstract}
For a positive integer $n$, we study the collection $\mathcal{F}_{\mathrm{fin}}(n)$ formed of all $n\times n$ matrices whose entries $a_{ij}$, $1\leq i,j\leq n$, can be written as $a_{ij}=\tau(U_j^*U_i)$ for some $n$-tuple $U_1, U_2, \ldots, U_n$ of unitaries in a finite-dimensional von Neumann algebra $\mathcal{M}$ with tracial state $\tau$. 
We show that $\mathcal{F}_{\mathrm{fin}}(n)$ is not closed for every $n\geq 8$. 
This improves a result by Musat and R{\o}rdam which states the same for $n\geq 11$.
\end{abstract}

\maketitle
\thispagestyle{empty}

\section{Introduction}
The Connes Embedding Problem concerning ``approximation'' of a type II$_1$ von Neumann algebra by finite-dimensional ones has long been regarded as one of the most important problems in the theory of operator algebras. 
By now this problem is known to be equivalent to various other problems. 
One of these problems is claimed to be resolved in the preprint \cite{JNVWY}, and it implies that the Connes Embedding Problem has a negative answer. 
However, the difference between the world of general type II$_1$ von Neumann algebras and the finite-dimensional world is still quite mysterious.

Let $n$ be a positive integer. 
We consider the collection $\cD(n)$ (resp.\ $\cD_{\mathrm{fin}}(n)$) formed of all $n\times n$ matrices whose entries $a_{ij}$, $1\leq i,j\leq n$, can be written as $a_{ij}=\tau(P_iP_j)$ for some $n$-tuple $P_1, P_2, \ldots, P_n$ of projections in a finite von Neumann algebra (resp.\ finite-dimensional von Neumann algebra) $\cM$ with (normal faithful) tracial state $\tau$. 
It is known that the existence of $n$ such that $\cD_{\mathrm{fin}}(n)$ is not dense in $\cD(n)$ is equivalent to the negative answer to the Connes Embedding Problem.
Likewise, we may consider unitaries instead of projections. 
Let $\cG(n)$ (resp.\ $\cF_{\mathrm{fin}}(n)$) denote the set of all $n\times n$ matrices whose entries $a_{ij}$, $1\leq i,j\leq n$, can be written as $a_{ij}=\tau(U_j^*U_i)$ for some $n$-tuple $U_1, U_2, \ldots, U_n$ of unitaries in a finite von Neumann algebra (resp.\ finite-dimensional von Neumann algebra) $\cM$ with tracial state $\tau$. 
Then the existence of $n$ such that $\cF_{\mathrm{fin}}(n)$ is not dense in $\cG(n)$ is equivalent to the negative answer to the Connes Embedding Problem.
More information concerning the sets $\cD(n)$, $\cD_{\mathrm{fin}}(n)$, $\cG(n)$, and $\cF_{\mathrm{fin}}(n)$ (their motivation, the relation to the Connes Embedding Problem and quantum information theory, and applications to the theory of von Neumann algebras) is detailed in \cite{MR}.

It is natural to pursue a better understanding of the shapes of $\cD(n)$, $\cD_{\mathrm{fin}}(n)$, $\cG(n)$, and $\cF_{\mathrm{fin}}(n)$ for a small $n$. 
It is easily seen that these sets are bounded and convex. 
A technique of ultraproduct enables us to show that $\cD(n)$ and $\cG(n)$ are compact.
The following problems are of particular interest.
\begin{problem}
What is the smallest $n$ such that $\cD_{\mathrm{fin}}(n)$ is not compact? 
\end{problem}
\begin{problem}
What is the smallest $n$ such that $\cD_{\mathrm{fin}}(n)$ is not dense in $\cD(n)$? 
\end{problem}
\begin{problem}
What is the smallest $n$ such that $\cF_{\mathrm{fin}}(n)$ is not compact? 
\end{problem}
\begin{problem}
What is the smallest $n$ such that $\cF_{\mathrm{fin}}(n)$ is not dense in $\cG(n)$? 
\end{problem}

Let $n_k$ denote the solution of Problem 1.$k$, $k=1,2,3,4$. 
Recall that the negative answer to the Connes Embedding Problem implies $n_2, n_4<\infty$.
Dykema and Juschenko proved that every point of $\cG(n)$ comes from an $n$-tuple of commuting unitaries if $n\leq 3$ \cite[Corollary 2.8]{DJ}. 
This implies $n_3, n_4\geq 4$.
As for $n_1$ and $n_2$, Dykema--Paulsen--Prakash obtained the upper bound $n_1\leq 5$ in \cite{DPP} (see also \cite{MR}). 
Musat and R{\o}rdam applied it to show $n_3\leq 11$ \cite[Theorem 3.6]{MR}.
They also gave $n_1, n_2\geq 3$ \cite[Proposition 2.1]{MR}.
In \cite[Theorem 4.8]{R}, Russell proved that $n_1, n_2\geq 4$.

Our main result is that $n_3\leq 8$.
Thus, after our contribution we know 
\[
4\leq n_1\leq 5,\quad 4\leq n_2,\quad 4\leq n_3\leq 8,\quad\text{and}\quad 4\leq n_4.
\]
Note that the proof of $n_1\leq 5$ in \cite{DPP,MR} depends on the structure of finite sums projections given by Kruglyak, Rabanovich, and Samo\u{\i}lenko \cite{KRS}.
Our proof of $n_3\leq 8$ uses the structure of finite products of self-adjoint unitaries.

\section{Results}
Our goal in the rest of this article is to show that $\cF_{\mathrm{fin}}(n)$ is not closed if $n\geq 8$. 
\begin{lemma}\label{sach}
Let $\cM$ be a von Neumann algebra with faithful tracial state $\tau$.
Let $U \in \cM$ be a unitary. 
Then the following are equivalent. 
\begin{enumerate}
\item $U=U^*$.
\item The operator $(U+iI)/\sqrt{2}$ is also a unitary.
\item There is a unitary operator $V\in M$ such that 
\[
\sqrt{2}\operatorname{Re} \tau(U^*V)-\operatorname{Im}\tau(U-\sqrt{2}V)=2.
\] 
\end{enumerate}
\end{lemma}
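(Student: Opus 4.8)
The plan is to route everything through the single auxiliary operator $W := (U + iI)/\sqrt{2}$, which is exactly the object appearing in (2), and to reduce the whole equivalence to one positivity-and-equality argument. The first step is the purely algebraic identity
\[
W^*W = WW^* = I + B, \qquad B := \tfrac{i}{2}(U^* - U),
\]
obtained by expanding $(U^* \mp iI)(U \pm iI)$ and using $U^*U = UU^* = I$; here $B$ is self-adjoint and satisfies $\tau(B) = \operatorname{Im}\tau(U)$. Condition (2) says precisely that $W$ is unitary, i.e.\ that $W^*W = I$, which by the identity means $B = 0$, and $B = 0$ is in turn equivalent to $U = U^*$. This disposes of (1)$\Leftrightarrow$(2) at once.

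The conceptual crux is to recognize the opaque expression in (3) as a shifted squared distance from $W$. Expanding with $\operatorname{Im} z = -\operatorname{Re}(iz)$ and $\tau(X^*) = \overline{\tau(X)}$, I would rewrite the left-hand side of (3) as
\[
\sqrt{2}\operatorname{Re}\tau(U^*V) - \operatorname{Im}\tau(U - \sqrt{2}V) = 2\operatorname{Re}\tau(W^*V) - \tau(B).
\]
The main obstacle is exactly this reformulation: one must check that the factors of $\sqrt{2}$ and the two imaginary-part terms combine so that everything collapses onto $2\operatorname{Re}\tau(W^*V)$ with the single correction $-\tau(B)$. The verification itself is routine real/imaginary-part bookkeeping, but spotting that (3) is secretly a distance condition is what makes the lemma transparent. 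Once this is in hand, I would substitute $2\operatorname{Re}\tau(W^*V) = \tau(W^*V + V^*W)$ and the identity $W^*W = I + B$ to obtain, for every unitary $V$,
\[
2\operatorname{Re}\tau(W^*V) - \tau(B) = \tau(W^*W + V^*V) - \tau\bigl((W - V)^*(W - V)\bigr) - \tau(B) = 2 - \tau\bigl((W - V)^*(W - V)\bigr),
\]
where I used $\tau(V^*V) = 1$ and $\tau(W^*W) = 1 + \tau(B)$.

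With this reformulation the remaining implications are immediate. Since $(W - V)^*(W - V) \ge 0$ and $\tau$ is positive, the displayed quantity never exceeds $2$, and it equals $2$ for a given unitary $V$ if and only if $\tau\bigl((W-V)^*(W-V)\bigr) = 0$, which by \emph{faithfulness} of $\tau$ forces $W = V$. For (3)$\Rightarrow$(1): if some unitary $V$ attains the value $2$, then $W = V$ is unitary, so (2) holds and hence (1). For (1)$\Rightarrow$(3): assuming $U = U^*$ gives $B = 0$, whence $W$ is unitary by (1)$\Leftrightarrow$(2), and taking $V = W$ makes $(W-V)^*(W-V) = 0$, so the left-hand side of (3) equals $2$. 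This yields (1)$\Leftrightarrow$(2) and (1)$\Leftrightarrow$(3), completing the cycle; the faithfulness hypothesis enters at exactly one point, namely the passage from $\tau\bigl((W-V)^*(W-V)\bigr)=0$ to $W=V$.
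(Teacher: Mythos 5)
Your proof is correct, and it is essentially the paper's argument in a slightly repackaged form: writing the key quantity as $2-\tau\bigl((W-V)^*(W-V)\bigr)$ with $W=(U+iI)/\sqrt{2}$ is exactly the paper's expansion of $\tfrac12\tau\bigl((U+iI-\sqrt{2}V)^*(U+iI-\sqrt{2}V)\bigr)$, with faithfulness of $\tau$ invoked at the same single point to conclude $W=V$. The only cosmetic difference is that you verify $(1)\Leftrightarrow(2)$ algebraically via $W^*W=I+\tfrac{i}{2}(U^*-U)$ rather than by inspecting the spectrum.
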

\begin{proof}
$(1)\Leftrightarrow(2)$ This is clear by looking at the spectrum.\\
$(2)\Leftrightarrow(3)$ If $V\in \cM$ is unitary, then 
\[
(U+iI-\sqrt{2}V)^*(U+iI-\sqrt{2}V)=4I-iU+iU^*+\sqrt{2}iV-\sqrt{2}iV^*-\sqrt{2}U^*V-\sqrt{2}V^*U.
\] 
It follows that 
\[
\frac{1}{2}\tau((U+iI-\sqrt{2}V)^*(U+iI-\sqrt{2}V)) =2+\operatorname{Re} \tau(-iU+\sqrt{2}iV-\sqrt{2}U^*V).
\]
Since $\tau$ is faithful, the condition $U+iI-\sqrt{2}V=0$ is equivalent to $2+\operatorname{Re} \tau(-iU+\sqrt{2}iV-\sqrt{2}U^*V)=0$, and this leads to the desired conclusion.
\end{proof}

We use the following well-known fact (see \cite[Proposition 3.12]{BNS}).
\begin{lemma}
If $\cM$ is a von Neumann algebra of type II$_1$ and $\kappa\in \mathbb{R}$, then there are self-adjoint unitaries $S_1, S_2, S_3, S_4\in \cM$ such that $S_1S_2S_3S_4=e^{2\pi i \kappa}I$.
\end{lemma}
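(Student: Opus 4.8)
The plan is to amplify to $2\times 2$ matrices and then use the type II$_1$ structure to defeat the determinant obstruction that forbids any finite-dimensional solution. Since $\cM$ is of type II$_1$, there is a projection $p$ with $p\sim I-p$; fixing matrix units identifies $\cM$ with $M_2(N)$, where $N:=p\cM p$ is again a von Neumann algebra of type II$_1$. Writing $z:=e^{2\pi i\kappa}$, it suffices to exhibit four self-adjoint unitaries in $M_2(N)$ whose product is the scalar $zI$.

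The elementary engine is the following characterization: a unitary $W$ can be written as $SS'$ with $S,S'$ self-adjoint unitaries if and only if there is a self-adjoint unitary $\Sigma$ with $\Sigma W\Sigma=W^*$. Necessity is clear from $S(SS')S=(SS')^*$; for sufficiency one takes $S=\Sigma$ and $S'=\Sigma W$ and checks, using $\Sigma W\Sigma=W^*$, that $S'$ is again a self-adjoint unitary. I would therefore split $zI=WW'$ and arrange each factor to admit such a conjugating symmetry.

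Here the hypothesis enters. Because every type II$_1$ von Neumann algebra contains a unital copy of the hyperfinite II$_1$ factor, I can find in $N$ a Haar unitary $u$ and a unitary $v$ obeying the covariance relation $vuv^*=z^2u$ (the generators of the rotation algebra for the angle $z^2$, which is hyperfinite and hence embeds unitally). I then set
\[
W=\begin{pmatrix} u^* & 0\\ 0 & u\end{pmatrix},\qquad W'=\begin{pmatrix} zu & 0\\ 0 & zu^*\end{pmatrix},
\]
so that $WW'=\operatorname{diag}(u^*zu,\;uzu^*)=zI$ because $z$ is central. Conjugation by $\begin{pmatrix}0&1\\1&0\end{pmatrix}$ sends $W$ to $W^*$, while the relation $vuv^*=z^2u$ gives $\begin{pmatrix}0&v\\v^*&0\end{pmatrix}W'\begin{pmatrix}0&v\\v^*&0\end{pmatrix}=(W')^*$. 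By the characterization, each of $W,W'$ is a product of two self-adjoint unitaries, and concatenating these factorizations displays $zI$ as a product of four.

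I expect the genuine difficulty to be conceptual rather than computational. Any realization of $zI$ as a product of self-adjoint unitaries inside a fixed matrix algebra $M_d$ is blocked by the determinant, since each symmetry has determinant $\pm1$ whereas $zI$ has determinant $z^d$, which is not $\pm1$ for generic $\kappa$. The point of the type II$_1$ hypothesis is exactly to remove this obstruction: the covariance relation $vuv^*=z^2u$, unavailable in finite dimensions for irrational $2\kappa$, lets the phase $z^2$ be absorbed by conjugation instead of by a determinant. The steps requiring care are thus the existence of the pair $(u,v)$ satisfying the relation exactly, and the verification that the two displayed matrices built from $1$ and $v$ are honestly self-adjoint unitaries.
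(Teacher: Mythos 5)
The paper does not prove this lemma at all: it is imported as a ``well-known fact'' with a citation to [BNS, Proposition 3.12], so there is no in-paper argument to compare yours against. Your proof is correct and self-contained, and it is essentially the standard route for such statements: halve to identify $\cM$ with $M_2(N)$, observe that a unitary $W$ is a product of two symmetries as soon as some symmetry $\Sigma$ satisfies $\Sigma W\Sigma=W^*$ (take $S=\Sigma$, $S'=\Sigma W$), and then use off-diagonal symmetries of the form $\left(\begin{smallmatrix}0&a\\a^*&0\end{smallmatrix}\right)$ together with the covariance relation $vuv^*=z^2u$ to absorb the phase. The computations check out: the flip conjugates $W=\operatorname{diag}(u^*,u)$ to $W^*$; the relation $vuv^*=z^2u$ and its consequence $v^*uv=\bar z^{\,2}u$ give $\Sigma'W'\Sigma'=(W')^*$ for $W'=\operatorname{diag}(zu,zu^*)$; and $WW'=zI$. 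This is exactly the mechanism by which the type II$_1$ hypothesis defeats the determinant obstruction, as you say.

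The one step you should not wave at is the existence of the covariant pair $(u,v)$ in an \emph{arbitrary} type II$_1$ von Neumann algebra. Your justification --- every type II$_1$ von Neumann algebra contains a unital copy of the hyperfinite II$_1$ factor --- is true, but it is genuinely classical only for factors; for non-factors it requires an extra argument (direct integral decomposition with a measurable choice of embeddings, or an iterated-halving construction together with a check that the generated hyperfinite subalgebra is a II$_1$ factor). Since Theorem \ref{main} only applies the lemma with $\cM=\cR$, the factor case would suffice for the paper, but as stated the lemma concerns general type II$_1$ algebras, so either restrict to that case or supply the reduction. Relatedly, ``the rotation algebra for the angle $z^2$ is hyperfinite and hence embeds unitally'' is loose phrasing: what you actually need is that for every $\lambda$ in the circle $\cR$ contains unitaries with $vuv^*=\lambda u$ --- clock and shift in a unital $M_q\subset\cR$ when $\lambda$ is a root of unity, and the generators of the trace-GNS weak closure of the irrational rotation algebra (which is $\cR$) otherwise. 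Neither point is a fatal gap, but both deserve an explicit sentence.
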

On the other hand, in the matrix algebra $\bM_n$, any finite product of self-adjoint unitaries has determinant $\pm 1$. 
Therefore, in a finite-dimensional von Neumann algebra, a finite product of self-adjoint unitaries is never equal to the  operator $e^{2\pi i \kappa}I$ for an irrational $\kappa$.

\begin{theorem}\label{main}
If $n\geq 8$, then the set $\cF_{\mathrm{fin}}(n)$ is not closed.
\end{theorem}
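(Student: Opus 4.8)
The plan is to produce a single $8\times 8$ matrix $A$ lying in the closure $\overline{\cF_{\mathrm{fin}}(8)}$ but not in $\cF_{\mathrm{fin}}(8)$ itself; appending copies of the identity unitary then settles every $n\geq 8$. By the lemma on products of self-adjoint unitaries quoted above, I first fix an irrational $\kappa$ and self-adjoint unitaries $S_1,S_2,S_3,S_4$ in the hyperfinite II$_1$ factor $\cR$ with $S_1S_2S_3S_4=e^{2\pi i\kappa}I$. The key device is to telescope these into the chain $W_0=I$, $W_1=S_1$, $W_2=S_1S_2$, $W_3=S_1S_2S_3$, together with four auxiliaries $V_1,V_2,V_3$ given by $V_k=(W_k+iW_{k-1})/\sqrt2$ and $V_4=(e^{2\pi i\kappa}I+iW_3)/\sqrt2$. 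By the equivalence $(1)\Leftrightarrow(2)$ of Lemma~\ref{sach} each $V_k$ is genuinely a unitary, because the corresponding increment $W_{k-1}^*W_k=S_k$ (and, for the last one, $e^{2\pi i\kappa}W_3^*=S_4$) is self-adjoint. I let $A$ be the correlation matrix of the $8$-tuple $(W_0,W_1,W_2,W_3,V_1,V_2,V_3,V_4)$. Since all eight operators lie in $\cR$, approximating them in $\|\cdot\|_2$ by unitaries in finite-dimensional subalgebras of the hyperfinite factor produces matrices in $\cF_{\mathrm{fin}}(8)$ converging to $A$, so $A\in\overline{\cF_{\mathrm{fin}}(8)}$.

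The crux is to show $A\notin\cF_{\mathrm{fin}}(8)$. Suppose it were realized by unitaries $U_1,\dots,U_8$ in a finite-dimensional von Neumann algebra $\cM$ (after cutting by the support of the trace I may assume the tracial state is faithful, as Lemma~\ref{sach} requires). Left-multiplying the whole tuple by $U_1^*$ leaves the correlation matrix unchanged, so I may assume $U_1=I$. I then define $S_k=U_k^*U_{k+1}$ for $k=1,2,3$ and $S_4=e^{2\pi i\kappa}U_4^*$. The entries of $A$ are rigged so that, for each $k$, the numerical identity of condition $(3)$ of Lemma~\ref{sach} holds for the pair $(U,V)=(S_k,\,U_k^*U_{4+k})$; the essential point is that the would-be three-fold trace collapses to an actual correlation entry,
\[
\tau(S_k^*\,U_k^*U_{4+k})=\tau\bigl((U_k^*U_{k+1})^*U_k^*U_{4+k}\bigr)=\tau(U_{k+1}^*U_{4+k}),
\]
and likewise $\tau(S_k)$ and $\tau(U_k^*U_{4+k})$ are entries of $A$ (the case $k=4$ using $U_1=I$ to read off $\tau(U_8)$). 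Hence Lemma~\ref{sach} forces each $S_k$ to be self-adjoint.

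Once all four $S_k$ are self-adjoint unitaries, the telescoping gives $S_1S_2S_3S_4=U_4\cdot e^{2\pi i\kappa}U_4^*=e^{2\pi i\kappa}I$, exhibiting $e^{2\pi i\kappa}I$ as a product of four self-adjoint unitaries in a finite-dimensional algebra. Taking determinants in each matrix block, the left-hand side has determinant $\pm1$ while $e^{2\pi i\kappa}I$ has determinant $e^{2\pi i\kappa d}$ on a $d\times d$ block, which is $\pm1$ only when $\kappa$ is rational; this contradiction shows $A\notin\cF_{\mathrm{fin}}(8)$, so $\cF_{\mathrm{fin}}(8)$ is not closed. For $n\geq 8$ I append $U_9=\dots=U_n=I$: the enlarged matrix is still a limit of finite-dimensional correlation matrices, while any finite-dimensional realization of it restricts, after the same normalization, to a realization of the $8\times 8$ block and reproduces the contradiction. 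I expect the main obstacle to be the tight budget of exactly eight unitaries: forcing four self-adjointness conditions \emph{and} simultaneously detecting a four-fold product relation through pairwise correlation entries only becomes feasible after the chain substitution $W_k=S_1\cdots S_k$, the conjugation trick $V=U_k^*U_{4+k}$ that turns three-fold traces into pairwise ones, and the economy of absorbing the top product $W_4=S_1S_2S_3S_4$ into the scalar $e^{2\pi i\kappa}I$ so that it need not occupy a ninth slot.
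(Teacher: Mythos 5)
Your proposal is correct and follows essentially the same route as the paper: the same telescoped chain $I, S_1, S_1S_2, S_1S_2S_3$ with the four ``rotated'' unitaries $(S_k+iI)/\sqrt{2}$ in the hyperfinite II$_1$ factor, the same use of condition (3) of Lemma \ref{sach} to force the increments $U_k^*U_{k+1}$ and (a phase multiple of) $U_4^*$ to be self-adjoint in any finite-dimensional realization, and the same determinant obstruction to writing $e^{2\pi i\kappa}I$ as a product of four self-adjoint unitaries in a finite-dimensional algebra. The only differences are cosmetic: your eighth unitary differs from the paper's by a left unitary factor (which merely changes which correlation entries enter the identity), and you normalize $U_1=I$ where the paper works with the products $V_1^*V_j$ throughout.
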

\begin{proof}
Fix an irrational number $\kappa$ and self-adjoint unitaries $S_1, S_2, S_3, S_4$ in the AFD II$_1$ factor $(\cR, \tau_\cR)$ with $S_1S_2S_3S_4=e^{2\pi i \kappa}I$. 
We set 
\[
U_1:=I,\quad U_2:=S_1,\quad U_3:=S_1S_2,\quad U_4:=S_1S_2S_3=e^{2\pi i \kappa}S_4,
\]
\[
U_5:=\frac{S_1+iI}{\sqrt{2}},\quad U_6:=\frac{S_1(S_2+iI)}{\sqrt{2}},\quad U_7:=\frac{S_1S_2(S_3+iI)}{\sqrt{2}},\quad U_8:=\frac{S_4+iI}{\sqrt{2}},
\] 
and $U_k:=I$ for all $9\leq k\leq n$. 
Since $\cR$ is AFD, the matrix $(\tau(U_j^*U_i))_{1\leq i,j\leq n}$ belongs to the closure of $\cF_{\mathrm{fin}}(n)$. 
We show that this matrix is not in $\cF_{\mathrm{fin}}(n)$.

Assume (towards a contradiction) that there are unitaries $V_j$, $j=1,2,\ldots, 8$, in a finite-dimensional von Neumann algebra $\cM$ with faithful tracial state $\tau$ such that $\tau(V_j^*V_i)=\tau_\cR(U_j^*U_i)$, $1\leq i,j\leq 8$. 
For $j=1,2,3$, we have
\[
\begin{split}
&\quad\,\, \sqrt{2}\operatorname{Re} \tau((V_{j}^*V_{j+1})^*(V_j^*V_{j+4}))-\operatorname{Im}\tau((V_{j}^*V_{j+1})-\sqrt{2}(V_j^*V_{j+4}))\\
&= \sqrt{2}\operatorname{Re} \tau(V_{j+1}^*V_{j+4})-\operatorname{Im}(\tau(V_{j}^*V_{j+1})-\sqrt{2}\tau(V_j^*V_{j+4}))\\
&= \sqrt{2}\operatorname{Re} \tau_\cR(U_{j+1}^*U_{j+4})-\operatorname{Im}(\tau_\cR(U_{j}^*U_{j+1})-\sqrt{2}\tau_\cR(U_j^*U_{j+4}))\\
&= \sqrt{2}\operatorname{Re} \tau_\cR\left(\frac{I+iS_j}{\sqrt{2}}\right)-\operatorname{Im}\left(\tau_\cR(S_j)-\sqrt{2}\tau_\cR\left(\frac{S_j+iI}{\sqrt{2}}\right)\right)\\
&= 2.
\end{split}
\] 
By Lemma \ref{sach}, $V_{1}^*V_{2}$, $V_2^*V_3$, $V_3^*V_4$ are self-adjoint.
Similarly, the equality
\[
\begin{split}
&\quad\,\, \sqrt{2}\operatorname{Re} \tau((e^{-2\pi i\kappa}V_1^*V_4)^*(V_1^*V_8))-\operatorname{Im}\tau((e^{-2\pi i\kappa}V_1^*V_4)-\sqrt{2}(V_1^*V_8))\\
&= \sqrt{2}\operatorname{Re} e^{2\pi i\kappa}\tau(V_4^*V_8)-\operatorname{Im}(e^{-2\pi i\kappa}\tau(V_1^*V_4)-\sqrt{2}\tau(V_1^*V_8))\\
&= \sqrt{2}\operatorname{Re} e^{2\pi i\kappa}\tau_\cR(U_4^*U_8)-\operatorname{Im}(e^{-2\pi i\kappa}\tau_\cR(U_1^*U_4)-\sqrt{2}\tau_\cR(U_1^*U_8))\\
&= \sqrt{2}\operatorname{Re} e^{2\pi i\kappa}\tau_\cR\left(\frac{e^{-2\pi i\kappa}(I+iS_4)}{\sqrt{2}}\right)-\operatorname{Im}\left(e^{-2\pi i\kappa}\tau_\cR(e^{2\pi i\kappa}S_4)-\sqrt{2}\tau_\cR\left(\frac{S_4+iI}{\sqrt{2}}\right)\right)\\
&= 2
\end{split}
\]
implies that $e^{-2\pi i\kappa}V_{1}^*V_{4}$ is also self-adjoint. 
Therefore, the operator $e^{2\pi i\kappa}I$ decomposes into the product $(V_{1}^*V_{2})\cdot(V_2^*V_3)\cdot (V_3^*V_4)\cdot (e^{-2\pi i\kappa}V_{1}^*V_{4})^*$ of four self-adjoint unitaries.
This is impossible because $\cM$ is finite-dimensional.
\end{proof}

By imitating the proof of \cite[Theorem 4.1]{MR}, we see that Theorem \ref{main} implies certain unfactorizability of some unital completely positive trace preserving mapping on $\bM_n$ for every $n\geq 8$. See \cite{MR} for the details.

It seems plausible for the author to guess that $n_1\in \{4,5\}$ is close to $n_3$ (and $n_2$ is close to $n_4$). 
Unfortunately, the above discussion, which heavily relies on the use of self-adjoint unitaries, apparently does not give an upper bound of $n_3$ that is better than $8$.

\medskip\medskip

\noindent 
\textbf{Acknowledgements.}\,
The author appreciates Travis B. Russell (Texas Christian University) for notifying the author that part of the results in the previous version of this article was already given in \cite{R} with similar techniques.

\end{document}